\definecolor{shadecolor}{rgb}{0.8,0.8,0.8}
\newtheorem{theorem}{Theorem}[section]
\newtheorem{lemma}[theorem]{Lemma}
\newtheorem{proposition}[theorem]{Proposition}
\newtheorem{definition}[theorem]{Definition}
\newcommand{\specexercise}[1]{}
\newenvironment{proof}{{\flushleft \emph{Proof}:}}{\hfill\ding{110}}
\newcommand{\g}{\mathfrak{g}}
\newcommand{\M}{\mathcal{M}}
\newcommand{\R}{\mathbb{R}}
\newcommand{\N}{\mathbb{N}}
\newcommand{\Z}{\mathbb{Z}}
\newcommand{\vp}{\varphi}
\newcommand{\dist}{\operatorname{dist}}
\newcommand{\id}{\operatorname{Id}}
\newcommand{\supp}{\operatorname{supp}}
\newcommand{\Diff}{\operatorname{Diff}}
\newcommand{\Diffc}{\operatorname{Diff}_\text{c}}
\newcommand{\length}{\operatorname{length}}
\newcommand{\tL}[1]{\widetilde{L}_{#1}}
\newcommand{\pl}{\partial}
\newcommand{\beq}{\begin{equation}}
\newcommand{\eeq}{\end{equation}}
\newcommand{\brk}[1]{\left(#1\right)}          
\newcommand{\Brk}[1]{\left[#1\right]}          
\newcommand{\BRK}[1]{\left\{#1\right\}}        
\numberwithin{equation}{section}
\begin{document}

\title{Geodesic distance for right-invariant metrics on diffeomorphism groups: critical Sobolev exponents}
\author{Robert L.~Jerrard\footnote{Department of Mathematics, University of Toronto.} \,and Cy Maor\footnotemark[1]}
\date{}
\maketitle

\begin{abstract}
We study the geodesic distance induced by right-invariant metrics on the group $\Diffc(\M)$ of compactly supported diffeomorphisms of a manifold $\M$, and show that it vanishes for the critical Sobolev norms $W^{s,n/s}$, where $n$ is the dimension of $\M$ and $s\in(0,1)$.
This completes the proof that the geodesic distance induced by $W^{s,p}$ vanishes if $sp\le n$ and $s<1$, and is positive otherwise.
The proof is achieved by combining the techniques of two recent papers --- \cite{JM18} by the authors, which treated the subcritical case, and \cite{BHP18} of Bauer, Harms and Preston, which treated the critical 1-dimensional case.
\end{abstract}


\section{Introduction, preliminaries and main result}\label{sec_preliminaries}
The geometry of different diffeomorphism groups (e.g., compactly-supported, symplectic, volume-preserving) with respect to various right-invariant metrics has a long history (see, e.g., \cite{ER91,EP93,MM05,BHP18}).
One of the basic questions about these geometries is whether the geodesic distance induced by a given norm on the associated Lie algebra of the group actually generates a metric space structure on the group.
This may fail if two distinct diffeomorphisms can be connected with paths of arbitrary short lengths.

In this paper, we complete the full characterization of this vanishing geodesic distance phenomenon on the group of compactly-supported diffeomorphisms of a manifold, with respect to Sobolev norms $W^{s,p}$ on its Lie algebra of vector fields.
This study started in \cite{MM05}, and continued in \cite{BBHM13,BBM13}, where (among other results) the threshold $s=1/p$ between positive and vanishing geodesic distance was identified for one-dimensional manifolds.
In a recent paper \cite{BHP18} it was shown that the geodesic distance vanishes in this critical space, completing the characterization in the one-dimensional case.
Virtually simultaneously with \cite{BHP18}, in \cite{JM18} the authors identified the critical space in the $n$-dimensional case, namely $s=\min(n/p,1)$, leaving the case $sp= n$, $s<1$ open.
In this paper we combine the techniques of \cite{BHP18,JM18} to show that the geodesic distance vanishes in this case, thus completing the classification of vanishing geodesic distance phenomenon for compactly-supported diffeomorphisms.

\paragraph{Setting}
Let $(\M,\g)$ be a Riemannian manifold of \emph{bounded geometry}; that is, $(\M,\g)$ has a positive injectivity radius and all the covariant derivatives of the curvature are bounded: $\|\nabla^i R\|_\g < C_i$ for $i\ge 0$.
We denote by $\Diffc(\M)$ the group of compactly supported diffeomorphisms of $\M$, that is the diffeomorphisms $\vp$ for which the closure of $\{\vp(x)\ne x\}$ is compact, and by $\Gamma_c(T\M)$ the Lie-algebra of compactly supported vector fields on $\M$, the tangent space of $\Diffc(\M)$ at the identity. 

Given a norm $\|\cdot\|_A$ on $\Gamma_c(T\M)$, the length of a smooth path $\vp:[0,1]\to \Diffc(\M)$ is defined by
\[
\length_A\vp = \int_0^1 \|u_t\|_A\, dt, \qquad u_t  := \partial_t \vp_t \circ \vp_t^{-1}.
\]
Note that from the vector fields $\{u_t\}_{t\in [0,1]}$, and the initial condition $\vp_0$, the path $\vp$ can be recovered via standard ODE theory.

The above formula for lengths induces the \textbf{geodesic distance} between $\vp_0,\vp_1\in \Diffc(\M)$ in a standard way by
\[
\dist_A(\vp_0,\vp_1) := \inf\BRK{\length_A \vp \,\,:\,\, \vp:[0,1]\to \Diffc(\M), \, \vp(0) = \vp_0, \, \vp(1) = \vp_1}.
\]
Note that $\dist_A$ forms a semi-metric on $\Diffc(\M)$, that is, it satisfies the triangle inequality but may fail to be positive.
This paper is concerned exactly with this phenomenon --- for which Sobolev norms (defined below) the geodesic distance induces a metric space structure on $\Diffc(\M)$.

$\dist_A$ is, in fact, the geodesic distance of the \textbf{right-invariant Finsler metric} on $\Diffc(\M)$ induced by $\|\cdot\|_{A}$, which is defined as
\[
\|X\|_{\vp,A} := \|X\circ \vp^{-1}\|_A
\]
for every $\vp\in \Diffc(\M)$ and $X\in T_\vp \Diffc(\M)$.
If $\|\cdot\|_A$ is induced by an inner-product, it defines a Riemannian metric on $\Diffc(\M)$ in a similar manner;
many well-known PDEs are, in fact, the geodesic equations of such Riemannian metrics.
See \cite{BBHM13} for more details.
The right-invariance is inherited by $\dist_A$, as summarized in the following lemma:

\begin{lemma}[Right-invariance]
\label{lem:right_invariance}
For $\psi,\vp_0,\vp_1\in \Diffc(\M)$, we have 
\[
\dist_A(\vp_0 \circ \psi, \vp_1 \circ \psi) = \dist_A(\vp_0,\vp_1).
\]
In particular,
\[
\dist_A(\id,\psi) = \dist_A(\id,\psi^{-1}),
\]
and 
\[
\dist_A(\id,\vp_1 \circ \vp_0) \le \dist_A(\id, \vp_1) + \dist_A(\id, \vp_0).
\]
\end{lemma}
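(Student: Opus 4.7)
The plan is to prove all three statements by a single, direct computation of how the generating vector field $u_t = \pl_t \vp_t \circ \vp_t^{-1}$ transforms under right-translation, time-reversal, and concatenation of paths.

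For the first assertion, I would take an arbitrary smooth path $\vp_t$ from $\vp_0$ to $\vp_1$ and set $\tilde\vp_t := \vp_t \circ \psi$. Since $\psi$ is time-independent,
\[
\tilde u_t = \pl_t \tilde\vp_t \circ \tilde\vp_t^{-1} = (\pl_t\vp_t \circ \psi)\circ (\psi^{-1}\circ\vp_t^{-1}) = u_t,
\]
so $\length_A\tilde\vp = \length_A\vp$. Taking the infimum over all admissible $\vp$ gives $\dist_A(\vp_0\circ\psi,\vp_1\circ\psi)\le \dist_A(\vp_0,\vp_1)$; applying the same argument with $\psi$ replaced by $\psi^{-1}$ yields equality.

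For the second assertion, I first note the intrinsic symmetry of $\dist_A$: reversing time via $\tilde\vp_t := \vp_{1-t}$ produces the generating field $\tilde u_t = -u_{1-t}$, and since $\|\cdot\|_A$ is a norm, $\length_A\tilde\vp = \length_A \vp$. Thus $\dist_A$ is symmetric, and combining this with the first assertion,
\[
\dist_A(\id,\psi) = \dist_A(\psi,\id) = \dist_A(\psi\circ\psi^{-1},\id\circ\psi^{-1}) = \dist_A(\id,\psi^{-1}).
\]
For the triangle inequality, given $\alpha$ from $\id$ to $\vp_0$ and $\beta$ from $\id$ to $\vp_1$, I would concatenate
\[
\gamma_t = \begin{cases} \alpha_{2t}, & t\in[0,1/2], \\ \beta_{2t-1}\circ \vp_0, & t\in[1/2,1], \end{cases}
\]
which runs from $\id$ to $\vp_1\circ\vp_0$; by the computation in the first assertion, the second leg has length $\length_A \beta$, so $\length_A \gamma = \length_A\alpha + \length_A\beta$. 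Taking the infimum over $\alpha,\beta$ gives the inequality.

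The only minor obstacle is that $\gamma$ need not be smooth at $t=1/2$, which matters only because lengths are defined for smooth paths. The standard remedy is to reparametrize $\alpha$ and $\beta$ by a smooth monotone bijection of $[0,1]$ that is constant near the endpoints; this does not change either length (the vector field is rescaled by a non-negative factor whose integral equals $1$) and allows a smooth concatenation. After this reparametrization, the above argument goes through verbatim.
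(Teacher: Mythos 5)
Your proof is correct, and it is the standard direct argument: the generating vector field $u_t = \pl_t\vp_t\circ\vp_t^{-1}$ is invariant under right-translation of the path, so lengths (and hence the infimum) are preserved; the remaining two claims follow from symmetry under time reversal and concatenation of paths, with the usual smoothing reparametrization near the junction. The paper does not prove the lemma itself but refers to \cite[Lemma~2.1]{JM18}, where this same computation is carried out, so your approach matches the cited source.
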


\begin{proof}
See \cite[Lemma~2.1]{JM18}.
\end{proof}

In this paper we are interested in fractional Sobolev $W^{s,p}$-norms, defined as follows:
\begin{definition}
\label{def:fractional_Sobolev}
For $0<s<1$ and $1\le p<\infty$, 
the $W^{s,p}$-norm of a function $f\in L^p(\R^n)$ is given by
\[
\|f\|_{s,p}^p = \| f\|_{L^p}^p + \int_{\R^n}\int_{\R^n} \frac {|f(x)-f(y)|^p}{|x-y|^{n+sp}}\, dx\,dy .
\]
\end{definition}

Given a Riemannian manifold $(\M,\g)$ of bounded geometry, this norm can be extended to $\Gamma_c(T\M)$ using a trivialization by normal coordinate patches on $\M$ (see \cite[Section~2.2]{BBM13} for details).
We will denote the induced geodesic distance on $\Diff_c(\M)$ by $\dist_{s,p}$. 
Different choices of charts result in equivalent metrics, and therefore the question of vanishing geodesic distance is independent of these choices.

Instead of using Definition~\ref{def:fractional_Sobolev} directly, we will bound the $W^{s,p}$-norm using the following interpolation inequalities:

\begin{proposition}[fractional Gagliardo--Nirenberg interpolation inequalities]
\label{pn:GN_inequality}
Assume that $1<p<\infty$. For every $f\in W^{1,p}(\R^n)$ and $s\in (0,1)$,
\[
\| f\|_{s,p} \le C_{s,p,n} \|f\|_{L^p}^{1-s} \|f\|_{1,p}^s\, , \quad\mbox{ where }\ \ 
\|f\|_{1,p}^p := \|f\|_{L^p}^p+ \|df\|_{L^p}^p,
\]
and
\[
\|f\|_{s,p} \le C_{s,p,n} \|f\|_{1,sp}^s \|f\|_{L^\infty}^{1-s}, \quad\mbox{ assuming }\ \ sp>1.
\]
\end{proposition}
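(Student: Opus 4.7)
The plan is to prove both inequalities by the classical split-and-optimize technique applied to the Gagliardo seminorm $[f]_{s,p}^p := \int_{\R^n}\int_{\R^n}|f(x)-f(y)|^p|x-y|^{-(n+sp)}\,dx\,dy$: decompose the double integral at $|x-y|=R$, bound each piece, then optimize in $R>0$. The $L^p$-piece of $\|f\|_{s,p}$ is absorbed in both cases by trivial applications of H\"older's inequality.

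For the first inequality, on the far-diagonal region $|x-y|>R$ the elementary bound $|f(x)-f(y)|^p\le 2^{p-1}(|f(x)|^p+|f(y)|^p)$ combined with Fubini produces a contribution $\le CR^{-sp}\|f\|_{L^p}^p$. On the near-diagonal region $|x-y|<R$, the fundamental theorem of calculus together with Jensen's inequality (valid since $p\ge 1$) gives $|f(x)-f(y)|^p\le |x-y|^p\int_0^1|\nabla f(x+t(y-x))|^p\,dt$; the unit-Jacobian change of variables $(z,h)=(x+t(y-x),y-x)$ decouples $z$ from $h$ and yields a contribution $\le CR^{(1-s)p}\|\nabla f\|_{L^p}^p$. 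Optimizing $R\sim \|f\|_{L^p}/\|\nabla f\|_{L^p}$ gives $[f]_{s,p}^p\le C\|f\|_{L^p}^{(1-s)p}\|\nabla f\|_{L^p}^{sp}$, from which the first inequality follows.

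For the second inequality, my first move would be to factor the $L^\infty$-norm out pointwise via $|f(x)-f(y)|^p\le (2\|f\|_\infty)^{(1-s)p}|f(x)-f(y)|^{sp}$, reducing matters to bounding $\int\int|f(x)-f(y)|^{sp}|x-y|^{-(n+sp)}\,dx\,dy$ by $\|f\|_{1,sp}^{sp}$. Splitting again at $|x-y|=R$: on the far region the analogous elementary bound gives $\le C\|f\|_\infty^{(1-s)p}R^{-sp}\|f\|_{L^{sp}}^{sp}$; on the near region, Jensen with exponent $sp$---valid precisely because of the hypothesis $sp>1$---gives $|f(x)-f(y)|^{sp}\le |x-y|^{sp}\int_0^1|\nabla f|^{sp}\,dt$.

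The main obstacle will be that the factor $|x-y|^{sp}$ arising from the gradient bound cancels the $|x-y|^{sp}$ in the denominator exactly, producing the divergent integral $\int_{|h|<R}|h|^{-n}\,dh$---the integrability margin enjoyed in the first inequality is precisely used up. I would resolve this by invoking a sharper pointwise control: combine the Hardy--Littlewood maximal-function estimate $|f(x)-f(y)|\le C|x-y|(M|\nabla f|(x)+M|\nabla f|(y))$ with the trivial bound $|f(x)-f(y)|\le 2\|f\|_\infty$ through the interpolation $\min(A,B)\le A^\alpha B^{1-\alpha}$ for some $\alpha$ strictly larger than $s$ on the near-diagonal region. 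The resulting factor $|x-y|^{\alpha p}$ restores integrability (since $\alpha p - sp - n > -n$), and the Hardy--Littlewood maximal inequality applies since $sp>1$ implies $\alpha p>1$ for $\alpha$ close to $s$. A careful passage $\alpha\to s^+$ together with the optimization in $R$ then yields the sharp inequality. Alternatively, and more cleanly, one may view the inequality as an instance of the abstract real-interpolation inequality $\|f\|_{(X_0,X_1)_{s,p}}\le C\|f\|_{X_0}^{1-s}\|f\|_{X_1}^s$ together with the identification $(L^\infty,W^{1,sp})_{s,p}\simeq W^{s,p}$ (equivalent norms) for $s\in(0,1)$, which sidesteps the explicit manipulation of the Gagliardo integral altogether.
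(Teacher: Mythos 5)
The paper does not prove Proposition~\ref{pn:GN_inequality} at all: it cites \cite[Corollary~3.2]{BM01} for both estimates. So your attempt is in any case a different route (a direct proof rather than a citation). Your argument for the first inequality is correct and standard: split the Gagliardo integral at $|x-y|=R$, bound the far part by $R^{-sp}\|f\|_{L^p}^p$, bound the near part by $R^{(1-s)p}\|\nabla f\|_{L^p}^p$ via the fundamental theorem of calculus and Jensen, optimize in $R$, and absorb the $L^p$ term trivially.

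Your treatment of the second inequality, however, has a genuine gap, and you have in fact located exactly where it is. Using a \emph{global} split radius $R$ and a \emph{global} interpolation exponent $\alpha>s$ in $\min(A,B)\le A^{1-\alpha}B^{\alpha}$ yields a near-diagonal contribution of the form
\[
\frac{C}{(\alpha-s)p}\,R^{(\alpha-s)p}\,\|f\|_{L^\infty}^{(1-\alpha)p}\,\|M|\nabla f|\,\|_{L^{\alpha p}}^{\alpha p}.
\]
There are two problems: the constant $1/(\alpha-s)$ diverges as $\alpha\to s^+$, and, more seriously, $\|\nabla f\|_{L^{\alpha p}}$ with $\alpha>s$ is \emph{not} controlled by $\|\nabla f\|_{L^{sp}}$. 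Optimizing in $R$ for fixed $\alpha$ does not repair this (you obtain the wrong Lebesgue exponent on $\nabla f$), and there is no limiting procedure $\alpha\to s^+$ that recovers the stated estimate from these bounds. The phrase ``a careful passage $\alpha\to s^+$'' is thus doing the real work and is not an argument. Your second alternative, invoking the real-interpolation identification $(L^\infty,W^{1,sp})_{s,p}\simeq W^{s,p}$, is also not a proof: interpolation with $L^\infty$ as an endpoint is delicate, and that identification is essentially equivalent to the inequality you are trying to prove (it is part of what \cite{BM01} establishes).

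The missing idea is to make the split radius \emph{local}, i.e.\ depending on $x$, rather than introducing an intermediate exponent $\alpha$. Write $g:=M|\nabla f|$ and use, as you proposed,
\[
|f(x)-f(y)| \le \min\bigl(2\|f\|_{L^\infty},\; C\,|x-y|\,(g(x)+g(y))\bigr).
\]
Keeping only the $g(x)$ term (the $g(y)$ term is symmetric), fix $x$ and split the $h=y-x$ integral at the \emph{pointwise} scale $\rho(x):=\|f\|_{L^\infty}/g(x)$:
\[
\int_{|h|<\rho(x)} \frac{|h|^p g(x)^p}{|h|^{n+sp}}\,dh + \int_{|h|\ge\rho(x)} \frac{\|f\|_{L^\infty}^p}{|h|^{n+sp}}\,dh
\;\lesssim\; \|f\|_{L^\infty}^{(1-s)p}\, g(x)^{sp}.
\]
Here the first integral converges because $(1-s)p>0$ (this is where $s<1$ enters) and the second because $sp>0$; no divergent constant appears because each split point is already optimal. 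Integrating in $x$ and using the Hardy--Littlewood maximal inequality $\|g\|_{L^{sp}}\le C\|\nabla f\|_{L^{sp}}$, which requires precisely $sp>1$, gives
\[
[f]_{s,p}^p \;\le\; C\,\|f\|_{L^\infty}^{(1-s)p}\,\|\nabla f\|_{L^{sp}}^{sp},
\]
and the $L^p$ piece follows from $\|f\|_{L^p}^p\le\|f\|_{L^\infty}^{(1-s)p}\|f\|_{L^{sp}}^{sp}$. With this replacement your proof of the second inequality closes; as written, it does not.
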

For a proof, see \cite[Corollary~3.2]{BM01}.  
These are the only properties of the $W^{s,p}$-norm that will be used in this paper. 

\paragraph{Main results}
The main result of this paper is the following:
\begin{theorem}\label{main_thm}
Let $(\M,\g)$ be an $n$-dimensional Riemannian manifold of bounded geometry, and $p\in(n,\infty)$.
Then $\dist_{n/p,p}(\vp_0,\vp_1)= 0$ whenever $\vp_0,\vp_1$
belong to the  same path-connected component of $\Diffc(\M)$.
\end{theorem}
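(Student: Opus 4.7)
By Lemma~\ref{lem:right_invariance}, the geodesic distance is right-invariant and satisfies the triangle inequality, so the theorem reduces to showing $\dist_{n/p,p}(\id,\psi)=0$ for every $\psi$ in the path-connected component of $\id$. A standard fragmentation argument---writing such a $\psi$ as a finite product of compactly supported diffeomorphisms, each supported in a small coordinate ball and isotopic to $\id$---further reduces the question to showing $\dist_{n/p,p}(\id,\vp)=0$ for a single $\vp\in\Diffc(\R^n)$ supported in a small ball, which by right-invariance we may normalize in a convenient way (for instance, reduce further to a diffeomorphism that moves a smaller ball by a fixed vector).

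For this reduced problem the key tool is the second interpolation inequality in Proposition~\ref{pn:GN_inequality}, which applies because $sp=n\ge 2>1$ (the critical one-dimensional case is handled in \cite{BHP18}):
\[
\|u\|_{n/p,p} \le C\,\|u\|_{1,n}^{n/p}\,\|u\|_{L^\infty}^{1-n/p}.
\]
Since $p>n$, the exponent $1-n/p$ on the $L^\infty$-factor is strictly positive, so a time-dependent vector field that is very small in $L^\infty$ while remaining bounded in $W^{1,n}$ automatically has small $W^{n/p,p}$-norm. My goal is therefore to build a family of paths $\vp^{(N)}:[0,T_N]\to\Diffc(\R^n)$ from $\id$ to $\vp$ whose generating vector fields $u_t^{(N)}$ have uniformly bounded $W^{1,n}$-norm and an $L^\infty$-norm that tends to zero quickly enough to beat the growth of $T_N$, so that $\int_0^{T_N}\|u_t^{(N)}\|_{n/p,p}\,dt\to 0$.

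The central obstacle is that the $W^{n/p,p}$-seminorm on $\R^n$ is exactly scale-invariant, so the direct rescaling $u(x)\mapsto\lambda^{-1}u(x/\lambda)$ that drives the subcritical proof of \cite{JM18} yields no gain at the critical exponent. Following the multiscale idea of \cite{BHP18}, the remedy is to assemble $\vp$ from $N\gg 1$ elementary motions, using the freedom to balance amplitude, spatial scale, and time duration of each so that, via the interpolation above, the scale-invariant $W^{n/p,p}$-cost is transferred to the $L^\infty$-factor, where one does gain a small power of $1/N$. Concretely, I would partition the relevant region into many small pieces, transport each one individually by an elementary compactly supported flow with a carefully chosen amplitude and profile, and then reassemble them; the triangle inequality from Lemma~\ref{lem:right_invariance} bounds the total distance by the sum of the $N$ piece-lengths, which should tend to $0$ as $N\to\infty$.

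I expect the main difficulty to be executing this multiscale construction genuinely in $n$ dimensions while simultaneously (i) realizing the prescribed displacement of each piece of $\vp$, (ii) keeping $\|u_t^{(N)}\|_{1,n}$ uniformly bounded in $N$, and (iii) driving $\|u_t^{(N)}\|_{L^\infty}\to 0$---together with the bookkeeping to pass from the flat model on $\R^n$ back to a general manifold $\M$ of bounded geometry via the normal-coordinate trivialization. Once such a family of paths is produced, the interpolation inequality applied termwise gives $\sum\|u_t^{(N)}\|_{n/p,p}\to 0$, hence $\dist_{n/p,p}(\id,\vp)=0$, and the theorem follows by the reductions above.
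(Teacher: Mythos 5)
Your proposal misses the key structural idea that makes the critical case tractable. You reduce to showing $\dist_{n/p,p}(\id,\vp)=0$ for a fixed $\vp$ supported in a ball, and then propose to build short paths that actually reach this fixed endpoint by "balancing amplitude, spatial scale, and time duration." But at the critical exponent the endpoint is exactly the thing you cannot control cheaply. The paper makes this point explicitly: a rescaled field $f(x/\lambda)$ has the same $W^{n/p,p}$-norm for all $\lambda$, so the fields with small $W^{n/p,p}$-norm but $\|u\|_\infty\approx 1$ are rigid, and it is genuinely unclear how to tune them so a flow lands on a prescribed diffeomorphism. Your sketch waves at a multiscale trick but gives no mechanism for hitting the target; "$T_N$ grows but $\|u_t^{(N)}\|_\infty\to 0$ fast enough" is precisely the bookkeeping you would have to carry out, and it is not clear that it can be done while also steering the flow to a fixed $\vp$.

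The device the paper uses to sidestep this is the displacement energy (Proposition~\ref{prop:displacement_energy}), imported from \cite{BHP18,EP93}. Since $\Diff_0(\M)$ is a simple group and left multiplication is Lipschitz (Lemma~\ref{lem:Lipschitz}), $\dist_{s,p}$ vanishes identically as soon as one produces a sequence $\Phi_k$ with $\dist_{s,p}(\id,\Phi_k)\to 0$ and $\Phi_k(U)\cap U=\emptyset$ for some open $U$. One therefore does not need to control the endpoint at all, only to push a set off itself; the paper emphasizes this is exactly what makes the transport stage "much simpler" than in \cite{JM18}. Your proposal does not mention displacement energy or simplicity of $\Diff_0$, and your fragmentation-plus-normalization step does not substitute for it.

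A second, more technical gap: even granting the displacement reduction, the critical capacity bound (Lemma~\ref{lem:zero_cap}) forces the squeezed supports to have diameter $\lambda_k \ll e^{-k^p}$, and the paper needs a two-stage squeezing to achieve this at cost $o(1)$; a single squeezing flow would have $W^{s,p}$-length blowing up. Your proposal contains no analogue of this, and the single balancing act you describe (amplitude versus $L^\infty$ versus time) would run into exactly the same quantitative obstruction. So the interpolation inequality you identified is the right tool, but the architecture built around it in your sketch is the subcritical one from \cite{JM18}, which the paper explains does not close at $sp=n$.
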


Combining this result with previous results, which are summed up in \cite[Theorem~2.4]{JM18}, we obtain the following full characterization of the vanishing geodesic distance phenomenon on compactly supported diffeomorphism groups:
\begin{theorem}
Let $(\M,\g)$ be an $n$-dimensional Riemannian manifold of bounded geometry.
Then for any $p\in[1,\infty)$, the induced $W^{s,p}$-geodesic distance vanishes on any path-connected component of $\Diffc(\M)$ if $sp\le n$ and $s<1$, and is strictly positive otherwise.
\end{theorem}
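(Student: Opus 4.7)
The statement bundles three regimes whose proofs are largely independent. The positivity part, $sp>n$ or $s=1$, and the vanishing in the strictly subcritical range $sp<n$ with $s<1$, are recorded in \cite[Theorem~2.4]{JM18}. The one-dimensional critical case $sp=n=1$ is \cite{BHP18}. All that is new is the multi-dimensional critical regime $sp=n$ with $s<1$, which is exactly Theorem~\ref{main_thm}; once that is in hand, the final statement is just a bookkeeping combination of citations, so I focus here on how I would prove Theorem~\ref{main_thm}.

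\textbf{Reductions.} Right-invariance (Lemma~\ref{lem:right_invariance}) reduces the claim to showing that $\dist_{n/p,p}(\id,\vp)$ can be made arbitrarily small for every $\vp$ in the identity component. Bounded geometry and a partition of unity let one work in a single normal coordinate chart, and thus on $\R^n$. Following the fragmentation scheme of \cite{JM18}, a further reduction lets one assume that $\vp$ is close to a small constant translation $\tau_v$ cut off to a fixed ball. The task then is to produce a smooth path from $\id$ to $\tau_v$ of $W^{n/p,p}$-length below any prescribed $\e>0$.

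\textbf{Main construction.} The key tool is the critical Gagliardo--Nirenberg bound from Proposition~\ref{pn:GN_inequality},
\[
\|u\|_{s,p}\le C\,\|u\|_{1,n}^{\,s}\,\|u\|_{L^\infty}^{1-s},\qquad s=n/p.
\]
Since $\|\cdot\|_{1,n}$ is scale invariant, the subcritical trick of rescaling a fixed profile no longer lowers the norm; following \cite{BHP18}, I would replace rescaling with \emph{fragmentation plus an iterated-logarithm profile}. Concretely, pack $N$ disjoint balls $B_1,\dots,B_N$ in the support region and, inside each $B_k$, run the flow of a single vector field $u^{(N)}_t$ whose profile on each $B_k$ is built from an iterated log of the radial variable, tuned so that $\|u^{(N)}_t\|_{1,n}$ remains bounded while $\|u^{(N)}_t\|_{L^\infty}\to 0$. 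Disjointness of the $B_k$ gives $\|u^{(N)}_t\|_{s,p}^p=\sum_k \|u^{(N)}_t|_{B_k}\|_{s,p}^p$, so fragmentation only costs a factor of $N^{1/p}$, which is comfortably beaten by the shrinkage in $\|u^{(N)}_t\|_{L^\infty}^{1-s}$. Running the flow for a time $T_N\to\infty$ produces an appreciable displacement inside each $B_k$, after which a short ``assembly'' step collects these $N$ small displacements into the single displacement $\tau_v$.

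\textbf{Main obstacle.} The assembly step is what makes the $n$-dimensional problem harder than the one-dimensional one. In \cite{BHP18} this is essentially automatic because $1$-d diffeomorphisms are ordered along the line. In higher dimensions it is genuinely nontrivial, and the plan is to handle it inductively: the residual diffeomorphism produced after the fragmentation flow differs from $\tau_v$ by a map that is strictly closer to the identity, and to which the same construction can be applied at a smaller scale. Summing a telescoping series of $W^{n/p,p}$-lengths, while tuning $N$ and the iterated-log parameters geometrically at each stage, produces a total length below $\e$. This iteration, and the care needed to keep the supports from interfering across stages, is the part I expect to be technically most delicate, and is precisely where the techniques of \cite{JM18} and \cite{BHP18} must be combined rather than merely concatenated.
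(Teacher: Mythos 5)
Your reduction and framing of what is new (only the multi-dimensional critical case, Theorem~\ref{main_thm}) is correct, and the log-type profile idea is in the right spirit of Lemma~\ref{lem:zero_cap}. But you miss the key structural idea that makes the critical case tractable, and that idea is not the profile but the \emph{displacement energy} reduction (Proposition~\ref{prop:displacement_energy}, drawn from \cite{BHP18,EP93}). The paper does not try to connect $\id$ to a prescribed translation $\tau_v$ at all; it only shows that the open cube $U=(0,1)^n$ has zero displacement energy, i.e.\ there are $\Phi_k$ with $\Phi_k(U)\cap U=\emptyset$ and $\dist_{s,p}(\id,\Phi_k)\to 0$. Simplicity of $\Diff_0(\M)$ plus Lipschitz continuity of left translations then upgrades this to vanishing of the whole geodesic distance. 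This is precisely the device that eliminates what you correctly identify as the hard part of your plan --- the ``assembly step'' of collecting small displacements into a fixed target $\tau_v$. You propose an inductive/telescoping scheme to handle that step, but you give no mechanism for it, and it is exactly the obstruction that the subcritical scaling of \cite{JM18} was used to overcome; in the critical case $sp=n$ that scaling is unavailable, which is the whole difficulty. So as written, your plan front-loads the very problem the paper circumvents, and the place you flag as ``technically most delicate'' is in fact a gap rather than a deferral.

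Two secondary points. First, your sketch of the flow is one-dimensional in flavor: in $n>1$ the paper must first \emph{squeeze} the transported strips in the $m=n-1$ orthogonal directions (a two-stage squeeze, exploiting $s<1$) so that the transported mass has diameter $\lambda_k\ll e^{-k^p}$ before the log profile gives a small enough $W^{s,p}$-norm; your ball-packing sketch omits this orthogonal squeezing entirely, and without it the estimate \eqref{eq:cost_transport_vector_field} fails. Second, your tuning ``$\|u\|_{1,n}$ bounded, $\|u\|_{L^\infty}\to 0$, run for time $T_N\to\infty$'' is the opposite of what the paper does for $n>1$: the profile $\xi_k$ there has $\|\xi_k\|_{L^\infty}=1$ and $\|\xi_k\|_{W^{1,n}}\to 0$ (the exponent $(1-n)/n$ is strictly negative only for $n>1$, which is exactly why the $n=1$ case needs the separate argument of \cite{BHP18}). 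The flow is run for a fixed unit time, not a long time, because the goal is only to push $U$ past $x=1$, not to hit a prescribed endpoint.
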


When $s>n/p$, then the Sobolev embedding $W^{s,p}\subset L^\infty$ implies that for every path $\{\vp_t\}_{t\in[0,1]}$ between $\vp_0,\vp_1\in\Diffc(\M)$, and every $x\in \M$,
\[
|\vp_1(x)-\vp_0(x)|  \le
\int_0^1 \left| \partial_t\vp_t(x)\right|\, dt
\le
\int_0^1 \| u_t\|_\infty\, dt
 \le
C\int_0^1 \| u(t)\|_{s,p} dt = C\length_{s,p}\vp,
\]
hence it is impossible to transport even a single point at a low cost.
On the other hand, when $sp \le n$, one expects to be able to transport small volumes over large distances at a small cost, using vector fields $u_t$ with $\|u_t\|_\infty \approx 1$ but $\|u_t\|_{s,p}\ll 1$.
Indeed, such vector fields are at the heart of all vanishing geodesic distance constructions on $\Diffc(\M)$ \cite{MM05,BBHM13,BHP18,JM18}.

The main difficulty in proving Theorem~\ref{main_thm}, compared with the subcritical case $s<\min\BRK{n/p,1}$ proved in \cite{JM18}, is that such vector fields are quite rigid in the critical case $sp=n$.
In the subcritical case, on the other hand, any function $f\in W^{s,p}(\R^n)$ can be rescaled $f_\lambda(x) := f( x/\lambda )$ with $\lambda \ll 1$ to obtain a function with the same $L^\infty$-norm but arbitrary small $W^{s,p}$-norm.
This rigidity in the critical case makes it difficult to control the endpoint of a path $\vp_t$ starting at $\vp_0$ and flowing along a vector field $u_t$ with these properties, and therefore it is difficult to construct arbitrary short paths between two {\em fixed} diffeomorphisms $\vp_0,\vp_1$.

In \cite{BHP18}, this problem is circumvented by using the notion of {\em displacement energy} defined in \cite{EP93}.
As described in the next section, they show that the geodesic distance vanishes if there exists an open set with zero displacement energy --- that is, if it is possible to transport the set so it does not intersect itself, for an arbitrary small cost.\footnote{Similar observations (in the context of contactomophorisms) also appear in \cite{She17}.}
This enabled them to prove Theorem~\ref{main_thm} in the one-dimensional case.
In this paper we combine this approach of using the displacement energy with the ideas used in \cite{JM18} to construct short paths in the subcritical case, to prove the vanishing of the geodesic distance in the critical case in every dimension.

The condition $s<1$ in Theorem~\ref{main_thm} is related to change, rather than transportation, of volumes.
That is, when $s\ge 1$ the $W^{s,p}$-norm detects any volume change, 
whereas when $s<1$ it is possible to have significant volume changes at a small cost, provided that no point moves very far.
When $n>1$, this plays an important role in constructing short paths, as will be clear from the proof.

Theorem~\ref{main_thm} is stronger than the main theorem of \cite{JM18}, as the latter proves vanishing geodesic distance only in the subcritical case.
Moreover, the proof of Theorem~\ref{main_thm} is significantly shorter, due to the fact that it is no longer needed to control of the endpoints of the short paths considered.
On the other hand, the proof of \cite{JM18}, being more direct, has the advantage of showing explicitly how two diffeomorphisms can be connected with arbitrary short paths, so in some sense it is more revealing or instructive.

\section{Displacement energy}

\begin{definition}
\label{def:displacement_energy}
The \textbf{displacement energy} of a set $V\subset \M$ with respect to the $W^{s,p}$-induced geodesic distance is defined by
\[
E(V) := \inf\BRK{\dist_{s,p}(\id, \vp) \,:\, \vp\in \Diffc(\M), \vp(V)\cap V = \emptyset }.
\]
\end{definition}

In this section we use \cite[Theorem~1]{BHP18} (see also \cite[Remark~7]{She17}, both generalize results of \cite{EP93}), to show that the $W^{s,p}$-geodesic distance vanishes if and only if there exists an open set $V\subset \M$ with $E(V) = 0$.

We start with the following lemma (which is almost identical to Step 2 in the proof of \cite[Theorem~2]{BHP18}):
\begin{lemma}
\label{lem:Lipschitz}
For every $s\in (0,1)$ and $p\in[1,\infty)$ and for every $\vp\in \Diffc(\M)$, the left multiplication operator $L_\vp: \Diffc(\M)\to \Diffc(\M)$, $L_\vp(\psi) = \vp\circ \psi$ is smooth and Lipschitz with respect to $\dist_{s,p}$.
\end{lemma}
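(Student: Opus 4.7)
The plan is to reduce the Lipschitz property of $L_\varphi$ to a single pointwise-in-time estimate on the vector fields generating a path. Smoothness of $L_\varphi$ is standard for the Fréchet Lie group structure on $\Diffc(\M)$, since composition with a fixed smooth diffeomorphism is smooth, so I would only sketch this part. For the Lipschitz bound, given any smooth path $\psi_t:[0,1]\to\Diffc(\M)$ with vector field $u_t = \partial_t \psi_t \circ \psi_t^{-1}$, the chain rule shows that the translated path $\tilde\psi_t := \varphi\circ\psi_t$ has vector field
\[
v_t = (d\varphi \cdot u_t)\circ \varphi^{-1}.
\]
The heart of the proof is therefore a pointwise-in-$t$ bound $\|v_t\|_{s,p} \le C_\varphi \|u_t\|_{s,p}$, valid for every compactly supported vector field $u_t$, with a constant depending only on $\varphi$ and its first two derivatives.

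To establish this bound I would argue directly from Definition~\ref{def:fractional_Sobolev}, working in normal coordinates on $\M$. The $L^p$ part of $\|v_t\|_{s,p}$ is immediate from the change of variables $x \mapsto \varphi(x)$ and the fact that $\varphi$ equals the identity outside a compact set, hence $d\varphi$ and $|\det d\varphi|$ are bounded above and below. For the Gagliardo double integral, changing variables in both $x$ and $y$ and using the bi-Lipschitz bound $|\varphi(x) - \varphi(y)| \ge c_\varphi |x-y|$ reduces matters to estimating
\[
\int\!\!\int \frac{|d\varphi(x) u_t(x) - d\varphi(y) u_t(y)|^p}{|x-y|^{n+sp}}\, dx\, dy.
\]
A triangle-inequality split of the numerator into $|d\varphi(y)||u_t(x)-u_t(y)|$ and $|d\varphi(x) - d\varphi(y)||u_t(x)|$ bounds the first term by $\|d\varphi\|_\infty^p$ times the Gagliardo part of $\|u_t\|_{s,p}^p$. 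For the second, $|d\varphi(x) - d\varphi(y)|\le \|d^2\varphi\|_\infty\min(|x-y|,C)$; since $u_t$ has compact support (because $\psi_t\in\Diffc(\M)$), splitting the resulting double integral between a near-diagonal region (where the gain $|x-y|^p$ produces integrability thanks to $s<1$) and a far region (where the compact support of $u_t$ gives integrability) bounds it by $C_\varphi\|u_t\|_{L^p}^p$.

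Once the pointwise bound is in hand, integrating over $t\in[0,1]$ yields $\length_{s,p}(\varphi\circ\psi)\le C_\varphi\length_{s,p}(\psi)$, and taking the infimum over paths with the given endpoints gives
\[
\dist_{s,p}(\varphi\circ\psi_0,\varphi\circ\psi_1)\le C_\varphi\,\dist_{s,p}(\psi_0,\psi_1),
\]
which is exactly the Lipschitz property. The main technical nuisance, as expected, is the bookkeeping in the fractional double integral when handling the $|d\varphi(x) - d\varphi(y)|$ term; the crucial structural inputs are that $\varphi\in\Diffc(\M)$ is globally bi-Lipschitz with identity tails and that $s<1$. As the lemma's statement remarks, this closely parallels Step~2 in the proof of Theorem~2 of \cite{BHP18}.
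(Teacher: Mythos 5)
Your proposal is correct and follows the same high-level reduction as the paper: identify the vector field of the translated path as $v_t = (d\vp\cdot u_t)\circ\vp^{-1}$, prove a pointwise-in-$t$ bound $\|v_t\|_{s,p}\le C_\vp\|u_t\|_{s,p}$, integrate, and take the infimum over paths. The difference lies in how the pointwise bound is established. The paper treats it as a black box, invoking Triebel's theorems on continuity of pointwise multiplication and of composition with diffeomorphisms in fractional Sobolev spaces (Theorems 4.2.2 and 4.3.2 of \cite{Tri92}). You instead prove the estimate directly from Definition~\ref{def:fractional_Sobolev}: change variables in the Gagliardo double integral (Jacobians bounded, $\vp$ bi-Lipschitz with identity tails), split the numerator $d\vp(x)u(x)-d\vp(y)u(y) = d\vp(y)(u(x)-u(y)) + (d\vp(x)-d\vp(y))u(x)$, and handle the second term by the near/far diagonal split, where the near-diagonal integral converges precisely because $s<1$. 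Your route is longer but self-contained and more elementary, and it makes visible exactly where $s<1$ and compact support of $\vp$ enter; the paper's route is shorter but opaque. One small inaccuracy: in the far-diagonal region, integrability of $\int_{|x-y|>1}|x-y|^{-(n+sp)}\,dy$ follows from $sp>0$ alone and does not require compact support of $u_t$; the compact support is what puts $u_t$ in $L^p$ to begin with, but it plays no role in that particular kernel estimate.
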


\begin{proof}
The smoothness of $L_\vp$ is obvious.
We now prove that it is Lipschitz.
First, let $X\in \Gamma_c(T\M)$. Then
\[
\|dL_\vp X\|_{\vp,W^{s,p}} = \| dL_\vp X \circ \vp^{-1}\|_{s,p} = \|(d\vp(X))\circ \vp^{-1}\|_{s,p} \le C_\vp \|X\|_{s,p},
\]
for some $C_\vp>0$, by the continuity of multiplications and compositions, see Theorems 4.2.2 and 4.3.2 in \cite{Tri92}.
Now, let $\psi_0,\psi_1 \in \Diffc(\M)$, and let $\Psi:[0,1]\to \Diffc(\M)$ be a path between them.
Then $\vp\circ \Psi$ is a path between $\vp\circ \psi_0$ and $\vp\circ \psi_1$, and
\[
\begin{split}
\dist_{s,p}(\vp\circ \psi_0,\vp\circ \psi_1) 
	&\le \int_0^1 \|\pl_t(\vp\circ \Psi)\|_{\vp\circ \Psi,W^{s,p}} \,dt 
	= \int_0^1 \|dL_\vp \pl_t\Psi\|_{\vp\circ \Psi,W^{s,p}} \,dt \\
	& =  \int_0^1 \|dL_\vp (\pl_t\Psi\circ \Psi^{-1})\|_{\vp,W^{s,p}} 
	\le C_\vp \int_0^1 \|\pl_t\Psi\circ \Psi^{-1}\|_{s,p} \,dt.
\end{split}
\]
Taking the infimum on $\Psi$ we obtain
\[
\dist_{s,p}(\vp\circ \psi_0,\vp\circ \psi_1) \le C_\vp \dist_{s,p}(\psi_0,\psi_1),
\]
which completes the proof.
\end{proof}

Denote by $\Diff_0(\M)$ the connected component of the identity, i.e., all diffeomorphisms in $\Diffc(\M)$ for which there exists a curve between them and $\id$.
$\Diff_0(\M)$ is a simple group \cite{Eps70}.  
This fact, together with Lemma~\ref{lem:Lipschitz}, and the fact that $\Diffc(V)$ is non-Abelian for any open $V$, implies that the following corollary of \cite[Theorem~1]{BHP18} holds:
\begin{proposition}
\label{prop:displacement_energy}
There exists $\vp\in \Diff_0(\M)$, $\vp\ne \id$, such that $\dist_{s,p}(\id,\vp) = 0$ if any only if there exists an open set $V$ such that $E(V)=0$.
If such $\vp$ exists, then $\dist_{s,p}$ is identically zero on $\Diff_0(\M)$.
\end{proposition}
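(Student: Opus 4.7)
The approach is to invoke the standard dichotomy that in a simple group any normal subgroup is either trivial or the whole group, applied to the ``zero-distance'' subgroup. Let
\[
N := \BRK{\vp\in \Diff_0(\M) \,:\, \dist_{s,p}(\id,\vp)=0}.
\]
This is a subgroup: closure under composition is the triangle inequality, and closure under inverses is the identity $\dist_{s,p}(\id,\psi)=\dist_{s,p}(\id,\psi^{-1})$, both supplied by Lemma~\ref{lem:right_invariance}. Normality uses right-invariance together with Lemma~\ref{lem:Lipschitz}: for $\psi\in N$ and $\vp\in\Diff_0(\M)$,
\[
\dist_{s,p}(\id,\vp\psi\vp^{-1}) = \dist_{s,p}(\vp,\vp\psi) \le C_\vp\,\dist_{s,p}(\id,\psi) = 0.
\]
Simplicity of $\Diff_0(\M)$ then forces $N = \{\id\}$ or $N = \Diff_0(\M)$, which already gives the ``in particular'' clause of the proposition.

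The forward (easy) implication follows: the existence of any non-identity element in $N$ gives $N=\Diff_0(\M)$, and then picking any $\eta\in\Diff_0(\M)$ that displaces a small geodesic ball $V$ (for instance, the time-one flow of a compactly supported vector field pointing transversely to $V$) yields $\dist_{s,p}(\id,\eta)=0$, so $E(V)=0$.

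The substantive reverse direction is the content of \cite[Theorem~1]{BHP18} in the present setting and proceeds by a commutator construction of Eliashberg--Polterovich type. Assuming $E(V)=0$, use that $\Diffc(V)$ is non-Abelian to pick $\vp_1,\vp_2\in\Diffc(V)$ with $[\vp_1,\vp_2]\ne\id$. For arbitrary $\e>0$, choose $\eta\in\Diffc(\M)$ with $\eta(V)\cap V=\emptyset$ and $\dist_{s,p}(\id,\eta)<\e$. Since $\supp(\eta\vp_1^{-1}\eta^{-1})\subset\eta(V)$ is disjoint from $\supp\vp_2\subset V$, the diffeomorphisms $\eta\vp_1^{-1}\eta^{-1}$ and $\vp_2$ commute; applying the group identity $[ab,c]=a[b,c]a^{-1}[a,c]$ with $a=\vp_1$, $b=\eta\vp_1^{-1}\eta^{-1}$, $c=\vp_2$ then collapses to
\[
\bigl[\,[\vp_1,\eta]\,,\,\vp_2\,\bigr] = [\vp_1,\vp_2].
\]
Bounding the left-hand side by iterating right-invariance and Lemma~\ref{lem:Lipschitz} twice yields $\dist_{s,p}(\id,[\vp_1,\vp_2])\le C(\vp_1,\vp_2)\,\e$, and since $\e$ was arbitrary this exhibits $[\vp_1,\vp_2]\ne\id$ as an element of $N$.

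The main obstacle is precisely this commutator estimate: one must iterate the Lipschitz bound of Lemma~\ref{lem:Lipschitz} carefully so that the two fixed diffeomorphisms $\vp_1,\vp_2$ can be absorbed into a single constant $C(\vp_1,\vp_2)$ independent of $\e$ and $\eta$. Once this is in hand, the remainder, namely the subgroup/normality/simplicity bookkeeping together with the elementary existence of a displacement for small balls, is a routine combination of the ingredients listed above.
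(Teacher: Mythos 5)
Your argument is correct and is precisely the one the paper intends: the paper gives no proof of its own, merely citing \cite[Theorem~1]{BHP18} together with the listed ingredients (simplicity of $\Diff_0(\M)$, Lemma~\ref{lem:Lipschitz}, and the non-Abelianness of $\Diffc(V)$), and you have faithfully reconstructed the underlying Eliashberg--Polterovich commutator argument. In particular your worry about the constant is resolved exactly as you hope: applying right-invariance plus Lemma~\ref{lem:Lipschitz} once for each nesting of the commutator gives $\dist_{s,p}(\id,[\vp_1,\eta])\le (C_{\vp_1}+1)\dist_{s,p}(\id,\eta)$ and then $\dist_{s,p}(\id,[[\vp_1,\eta],\vp_2])\le (C_{\vp_2}+1)(C_{\vp_1}+1)\dist_{s,p}(\id,\eta)$, so the constant depends only on $\vp_1,\vp_2$ and not on $\eta$.
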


\section{Proof of Theorem~\ref{main_thm}}
The case $n=1$, $p=2$ was proved in \cite[Theorem~2]{BHP18}.
Their proof holds for every $p>1$, so here we prove for the case $n>1$.
It is enough to prove the result for $\R^n$ --- indeed, for a general manifold of bounded geometry $(\M,\g)$, one can embed the following $\R^n$ construction into a single coordinate chart, used in the definition of the induced $W^{s,p}$-geodesic distance on $\M$.

Since we will often split $\R^n = \R\times \R^{n-1}$, it is convenient to
write $m=n-1$.
We will denote the standard coordinates on $\R^n$ by $(x,y)$, where $x\in \R$ and $y\in \R^m$.

In the following lemma we construct functions $\xi_k\in W^{n/p,p}(\R^n)$, with $\|\xi_k\|_\infty = 1$ and $\|\xi_k\|_{n/p,p}\to 0$, for $p>n$.
That is, we bound the capacity of small balls in the critical Sobolev space $W^{n/p,p}(\R^n)$.
\begin{lemma}\label{lem:zero_cap}
Let $sp= n>1$, $s<1$, and let $(\lambda_k)_{k\in \N}$ be a sequence of positive numbers, $\lambda_k\ll e^{-k^p}$.
Then there exists a sequence $(\xi_k)_{k\in \N}$ of functions $\xi_k:\R^n\to [0,1]$ such that
\begin{enumerate}
\item $\xi_k\equiv 1$ on $[-\lambda_k,\lambda_k]^n$
\item $\supp \xi_k \subset [-1,1]^n$
\item $k^{n-1}\|\xi_k\|_{s,p} \to 0$.
\end{enumerate}
\end{lemma}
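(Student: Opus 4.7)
My plan is to reduce the fractional $W^{s,p}$ estimate to a $W^{1,n}$ capacity estimate via the second interpolation inequality of Proposition~\ref{pn:GN_inequality}: since $sp=n>1$ and $\|\xi_k\|_\infty\le 1$, it gives $\|\xi_k\|_{s,p}\le C\|\xi_k\|_{1,n}^{n/p}$, so it suffices to construct $\xi_k$ with sufficiently small $W^{1,n}$-norm. For this, the classical object is a logarithmic cutoff adapted to cubes. For an outer scale $R_k\in(\lambda_k,1]$ to be fixed, I would set
\[
\xi_k(x):=\max\!\left\{0,\min\!\left\{1,\,\frac{\log(R_k/\|x\|_\infty)}{\log(R_k/\lambda_k)}\right\}\right\},
\]
where $\|x\|_\infty=\max_i|x_i|$; properties (1) and (2) of the lemma are then automatic.

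Next I would carry out the standard $W^{1,n}$ capacity computation: using $|\nabla\xi_k|\lesssim (\|x\|_\infty\log(R_k/\lambda_k))^{-1}$ on the annulus $\lambda_k\le\|x\|_\infty\le R_k$, and integrating against the $(n-1)$-measure of $\{\|x\|_\infty=r\}$ (which is $O(r^{n-1})$), one obtains $\|\nabla\xi_k\|_{L^n}^n\lesssim (\log(R_k/\lambda_k))^{-(n-1)}$, while trivially $\|\xi_k\|_{L^n}^n\le(2R_k)^n$. I would then choose $R_k$ so that these two contributions balance, for instance $R_k\sim(\log(1/\lambda_k))^{-(n-1)/n}$, which yields $\|\xi_k\|_{1,n}^n\lesssim (\log(1/\lambda_k))^{-(n-1)}$ and hence, via the interpolation,
\[
k^{n-1}\|\xi_k\|_{s,p}\;\lesssim\;\frac{k^{n-1}}{(\log(1/\lambda_k))^{(n-1)/p}}.
\]
The hypothesis $\lambda_k\ll e^{-k^p}$, read as $\log(1/\lambda_k)/k^p\to\infty$ (which is the strength implicitly required and evidently intended), makes the right-hand side tend to $0$.

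The main point requiring care is the calibration of scales: applying interpolation (ii) with exponent $s=n/p$ is precisely what converts $(\log(1/\lambda_k))^{-(n-1)}$ into $(\log(1/\lambda_k))^{-(n-1)/p}$, so that it can beat the prefactor $k^{n-1}$. Conceptually the lemma is just a quantitative instance of the well-known fact that points (and therefore small cubes) have vanishing $(s,p)$-capacity in the critical regime $sp=n$; the role of Proposition~\ref{pn:GN_inequality} is to let us deduce the fractional statement from the classical $W^{1,n}$ capacity estimate, without ever having to compute the double integral in Definition~\ref{def:fractional_Sobolev} by hand.
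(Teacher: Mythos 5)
Your proof is correct and follows essentially the same strategy as the paper: construct a logarithmic cutoff and feed it into the second Gagliardo--Nirenberg inequality of Proposition~\ref{pn:GN_inequality} with exponent $s=n/p$. The paper uses the radial cutoff $\xi_k(x)=\min\{1,\log(1/|x|)/\log(1/r_k)\}_+$ with inner radius $r_k=\sqrt n\,\lambda_k$ and \emph{fixed} outer radius $1$, whereas you use a cube-adapted cutoff with a \emph{shrinking} outer scale $R_k\sim(\log(1/\lambda_k))^{-(n-1)/n}$ chosen to balance the $L^n$ and gradient contributions.

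The one place where your refinement genuinely earns its keep: Proposition~\ref{pn:GN_inequality} as stated interpolates against the full $W^{1,n}$ norm, $\|\xi_k\|_{1,n}^n=\|\xi_k\|_{L^n}^n+\|d\xi_k\|_{L^n}^n$. The paper bounds $\|\xi_k\|_{L^n}^n\le|B_1(0)|=C(n)$, which is $O(1)$, and yet immediately concludes $\|\xi_k\|_{W^{1,n}}^n\lesssim\log(1/r_k)^{1-n}$; as written this step does not follow. (It can be repaired: computing $\|\xi_k\|_{L^n}$ honestly gives $O(\log(1/r_k)^{-n})$, which is even smaller than the gradient term, or alternatively one can read the cited Brezis--Mironescu inequality in its homogeneous seminorm form.) Your choice of $R_k$ sidesteps this entirely, since $(2R_k)^n$ is already of the right size; this makes your argument self-contained and robust against which version of the interpolation inequality one invokes. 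Beyond this calibration there is no conceptual divergence from the paper, and your final bound $k^{n-1}\|\xi_k\|_{s,p}\lesssim\bigl(k^p/\log(1/\lambda_k)\bigr)^{(n-1)/p}\to 0$ is exactly what is needed.
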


\begin{proof}
Let $r_k = \sqrt{n}\lambda_k$, so that $[-\lambda_k,\lambda_k]^n$ is contained in a ball of radius $r_k$.
Consider the function
\[
\xi_k(x) = 
\begin{cases}
1 							& |x|\le r_k 		\\
\frac{\log(1/|x|)}{\log(1/r_k)}	& |x|\in (r_k,1)	\\
0							& |x|\ge 1.

\end{cases}
\]
Then
\[
\| \xi_k\|_{L^n}^n \le |B_1(0)| = C(n)
\]
and $|d\xi_k|\le C \log(1/r_k)^{-1}/|x|$ for $|x|\in (r_k,1)$, and therefore
\[
\|d\xi_k \|_{L^n}^n \le C\log(1/r_k)^{1-n}.
\]
Hence
\[
\|\xi_k\|^n_{W^{1,n}} \le C\log(1/r_k)^{1-n}.
\]
Therefore, by Proposition~\ref{pn:GN_inequality}, we have
\[
\|\xi_k\|_{W^{n/p,p}} \le C \|\xi_k\|_{W^{1,n}}^{n/p} \|\xi_k\|_{L^\infty}^{1-n/p} \le C\log(1/r_k)^{(1-n)/p} \ll k^{(1-n)}.
\]
\end{proof}

Note that the above calculation is not optimal (one expects to be able to obtain $\|\xi_k\|_{{n/p},p}^p\approx\log(1/\lambda_k)^{1-p}$), but this simple construction is sufficient for our purposes.

\paragraph{General strategy of the proof:}
We now proceed to the proof of Theorem~\ref{main_thm}.
We prove it using Proposition~\ref{prop:displacement_energy}: we show that there exists an open set $U\subset \R^n$ whose displacement energy with respect to the $W^{n/p,p}$ norm is zero.
That is, we show that there exists a sequence $\Phi_k\in \Diffc(\R^n)$ such that $\Phi_k(U) \cap U= \emptyset$ and $\dist_{s,p} (\id,\Phi_k) \to 0$.
Specifically, we show this for the open set $U=(0,1)^n$.
In the rest of this section we construct these diffeomorphisms $\Phi_k$.

\paragraph{A sketch of the construction of the diffeomorphisms $\Phi_k$:}

Fix $k\in \N$.
	We consider $(0,1)^m$ as a union of sets $L_I$, $I=1,\ldots 2^m$, each $L_I$ is a union of $\approx  k^m$ disjoint cubes of diameter $\approx 1/k$.
	The main part of the proof consists of constructing diffeomorphisms $\Phi_k^I = (\phi_k^I(x,y) ,y)$, which satisfy 
	\[
	\lim_{k\to \infty}\dist_{s,p}(\id, \Phi_k^I) = 0, \qquad \phi_k^I(x,y)\ge x \qquad \text{and} \qquad \Phi_k^I ((0,1) \times L_I) \cap (0,1)^n = \emptyset.
	\]
	We then have that $\Phi_k = \Phi_k^{2^m} \circ \ldots \circ \Phi_k^1$ is the desired map.
	The construction of $\Phi_k^I$ is carried out in three stages:
	\[
	\Phi_k^I := \Psi_I^{-1} \circ \Theta_I \circ \Psi_I,
	\]
	where $\Psi_I$ and $\Theta_I$ (whose dependence of $k$ is omitted in order to simplify the notation) are as follows:
\begin{enumerate}
\item $\Psi_I(x,y) = (x,\psi_I(x,y)) $, squeezes each cube in $L_I$ to diameter $\lambda_k \ll e^{-k^p}$.
	Since $s<1$, this can be obtained at a small cost.
\item $\Theta_I(x,y) = (\theta_I(x,y) ,y)$ satisfies $\theta_I(0,y) = 1$ whenever $y$ is in one of the squeezed cubes. 
	Since $sp=n$, such a transport is possible at a low cost, but only if the volume of the transported points at every time is small enough; 
	this is the reason for the squeezing stage.
	$\theta_I$ is constructed (roughly) by flowing along translations of the vector field $u_t(x,y) = \xi_k(x-t,y)$, where $\xi_k$ are the maps constructed in Lemma~\ref{lem:zero_cap}.
\end{enumerate}

This scheme of splitting--squeezing--transporting--expanding is similar to the constructions in \cite{JM18}.
Since here we do not need to control the endpoint of the flow (just to transport $(0,1)^n$ away from itself), the transporting stage $\Theta_I$ is much simpler compared to \cite{JM18}.
On the other hand, the squeezing stage is somewhat more elaborate: In order for the norm of $\xi_k$ to be small, its support, which is a cube of diameter $\lambda_k$, needs to be small enough; in the subcritical case, it is enough to have $\lambda_k$ decay faster than any polynomial (in \cite{JM18} it is $\lambda_k \approx k^{-\log k}$), whereas here, in the critical case, we should have $\lambda_k \ll e^{-k^p}$, in view of Lemma~\ref{lem:zero_cap}.
Using the same squeezing strategy (i.e., same flow) as in \cite{JM18} for $\lambda_k \ll e^{-k^p}$ results in a path from $\id$ to a squeezing diffeomorphism $\Psi_I$ whose length is unbounded when $k\to \infty$ (as shown below), and so we need to alter this path in order to show that $\dist(\id,\Psi_I)$ tends to zero.

\paragraph{A detailed construction of the diffeomorphisms $\Phi_k$:}
We now construct $\Phi_k$ in full detail, and prove that $\dist_{s,p} (\id,\Phi_k) \to 0$.
Henceforth, all limits and asymptotic notations such as $o(1)$ are with respect to the limit $k\to \infty$.

\paragraph{Step I: splitting the cube into strips}

Fix $k\in \N$. We partition the lattice
$\frac{1}{k}\Z^m \subset \R^m$
into $2^m$  copies of $\frac{2}{k}\Z^m$:
\[
\frac 2k\Z^m,\, \frac2k\Z^m + \frac{e_1}k,\, \ldots,\, \frac2k\Z^m + \sum_{i=1}^m \frac {e_i}k,
\]
where $\{e_i\}_{i=1}^m$ is the standard basis of $\R^m$.
We index the different lattices as $Z_I$, $I\in \Z_2^m$, ordered by 
\[
(0,\ldots,0), (1,0,\ldots,0),  (0,1,0,\ldots,0), \ldots, (0,1,1,\ldots,1), (1,\ldots,1).
\]
Sometimes we will denote the indices by $1,\ldots, 2^m$ according to this order.
For each $I\in \Z^m_2$, denote 
\[
 L_I := \brk{Z_I + \Brk{-\frac{1}{2k},\frac{1}{2k}}^m}\cap [0,1]^m. 
\]
Note that $\cup L_I = [0,1]^m$. For $y\in \R^m$, we will write
\[
[y]_I := \mbox{ the closest point in $Z_I$ to $y$,}
\]
when a unique such point exists (such as when $y\in L_I$).

\paragraph{Step II: squeezing the strips}
Fix $1\le I \le 2^m$, and an auxiliary constant $\beta\in (0,1-s)$.
We now construct a diffeomorphism $\Psi_I\in \Diffc(\R^n)$, $\Psi_I(x,y) = (x,\psi_I(x,y))$, with
\beq
\label{eq:squeezing_cost}
\dist_{s,p}(\Psi_I,\id) = o(1),
\eeq
such that
\beq
\label{eq:squeezing}
\psi_I(x,y) = 2k\lambda_k(y-[y]_I) + [y]_I
\eeq
for every $x\in [0,1]$ and $y\in L_I$,
and with 
\beq
\label{eq:lambda_k_bound}
\lambda_k \ll \exp(-\exp(\beta k^\beta)) \ll \exp(-k^p).
\eeq
In particular, for every $x\in[0,1]$,
\beq
\label{eq:squeezed_strips}
\psi_I\brk{\BRK{x}\times L_I} = (Z_I + [-\lambda_k,\lambda_k]^m) \cap [0,1]^m =: \tL{I}.
\eeq

We construct the squeezing in two stages $\Psi_I = \Psi_I^2 \circ \Psi_I^1$. 
We show the construction for $I=1$; for $I\ne 1$ the construction is obtained by translating the $I=1$ case.

We start by constructing $\Psi_1^1$.
Let $u\in C_c^\infty((-1,1)^m;\R^m)$, such that $u(y) = -y$ for $y\in [-1/2,1/2]^m$, and extend it to a $2\Z^m$-periodic function on $\R^m$.
Let $\chi\in C_c^\infty(\R^n)$ such that $\chi\equiv 1$ on $[0,1]^n$.
Define $u_k^1(x,y) := \frac{\eta_k}{k} u(ky)\chi(x,y)$, where $\eta_k\gg 1$ will be fixed below. In particular, $u_k^1(x,y) = -\eta_k (y-[y]_1)$ for $x\in [0,1]$ and $y\in L_1$.

Note that 
\[
\|u_k^1\|_{L^p} \lesssim  \|u_k^1\|_{L^\infty} \lesssim \eta_k/k, \qquad \|d u_k^1\|_{L^p} \lesssim \|d u_k^1\|_{L^\infty} \lesssim \eta_k.
\]
Therefore, by Proposition~\ref{pn:GN_inequality} we have
\beq
\label{eq:squeezing_cost_1}
\|u_k^1\|_{s,p} \lesssim \frac{\eta_k^{1-s}}{k^{1-s}} \eta_k^s = \frac{\eta_k}{k^{1-s}} = o(1),
\eeq
where the last equality holds if we choose $\eta_k = k^\beta \ll k^{1-s}$ (recall that $\beta<1-s$).

Let $\psi^1(t,x,y)$ be the solution of 
\[
\pl_t \psi^1 = u_k^1(x,\psi^1), \qquad \psi^1(0,x,y) = y.
\]
Define $\psi^1_1(x,y) := \psi^1(1,x,y)$, and $\Psi^1_1(x,y) := (x,\psi^1_1(x,y))$.
A direct calculation shows that for  $(x,y)\in [0,1]\times [-1/2k, 1/2k]^m$, $\psi^1_1(x,y) = y e^{-\eta_k}$,
so by periodicity and the fact that $\chi\equiv 1$ on $[0,1]^n$, 
\beq
\label{eq:squeezing_first_stage}
\psi_1^1(x,y) = e^{-\eta_k}(y-[y]_1) + [y]_1
\eeq
for every $x\in [0,1]$ and $y\in L_1$.
Denote, for $x\in [0,1]$,
\[
\bar{L}_1 := \psi_1^1(\{x\}\times L_1).
\]
$\bar{L}_1$ is independent of $x$, and consists of $\approx k^m$ cubes of diameter $\approx \exp(-\eta_k)/k \ll \exp(-k^\beta)$.
Also, note that \eqref{eq:squeezing_cost_1} implies that 
\[
\dist_{s,p}(\Psi^1_1,\id) = o(1).
\]

Note that we cannot choose $\eta_k$ to be large enough such that $\bar{L}_1$ consists of cubes of diameter $\ll \exp(-k^p)$, which is our ultimate goal here; indeed, this would require $\eta_k \approx k^p \gg k^{1-s}$, which violates \eqref{eq:squeezing_cost_1}.
However, once we squeeze $L_1$ into $\bar{L}_1$, we can start a new squeezing stage that only squeezes $\bar{L}_1$.
That is, instead of having a vector field $u$ that satisfies $u(x,y) = -\alpha(y-[y]_1)$ for $y\in L_1$ (where $\alpha>0$ is a constant), we only need this to hold for $y\in \bar{L}_1$.
Since $\bar{L}_1$ is much smaller than $L_I$, we can have a much larger squeeze factor $\alpha$, while keeping the norm of $u$ small.
This second squeezing stage that is described below.

We denote the second squeezing stage $\Psi^2_1 = (x,\psi_1^2(x,y))$.
Again, we define $\psi_1^2(x,y) = \psi^2(1,x,y)$, where $\psi^2(t,x,y)$ is the solution of 
\[
\pl_t \psi^2 = u_k^2(x,\psi^2), \qquad \psi^2(0,x,y) = y,
\]
for $u_k^2(x,y)$ that satisfies $u_k^2(x,y) = -\alpha_k(y-[y]_1)$ for $x\in [0,1]$, $y\in \bar{L}_1$, and $\alpha_k\gg 1$ that will be fixed below.
Since $\bar{L}_1$ consists of cubes of diameter $\ll \exp(-k^\beta)$, we can choose $u_k^2$ such that 
\[
\|u_k^2\|_p  \lesssim \|u_k^2\|_\infty \ll \alpha_k\exp(-k^\beta), \qquad 
\|u_k^2\|_p \lesssim\|du_k^2\|_\infty \approx \alpha_k.
\]
Choosing $\alpha_k = \exp(\beta k ^\beta)$, we obtain, since $\beta<1-s$, that
\[
\|u_k^2\|_{s,p} \ll \alpha_k \exp(-(1-s)k^\beta) = o(1).
\]
In particular we have that 
\[
\dist_{s,p}(\Psi^2_1,\id) = o(1).
\]
It follows that $\psi_1^2(x,\cdot)$ squeezes $\bar{L}_1$ by a factor of $\exp(-\alpha_k) = \exp(-\exp(\beta k^\beta))$, that is 
\beq
\label{eq:squeezing_second_stage}
\psi_1^2(x,y) = \exp(-\exp(\beta k^\beta))(y-[y]_1) + [y]_1
\eeq
for every $x\in [0,1]$ and $y\in \bar{L}_1$.
Therefore
$\Psi_1 = \Psi_1^2 \circ \Psi_1^1$ squeezes $L_1$ such that \eqref{eq:squeezing}-\eqref{eq:lambda_k_bound} hold, with $\lambda_k = \exp(-\exp(\beta k^\beta)- k^\beta)/2k$.
By Lemma~\ref{lem:right_invariance}, we have 
\[
\dist_{s,p}(\Psi_1,\id) \le \dist_{s,p}(\Psi^2_1,\id) +\dist_{s,p}(\Psi^1_1,\id) = o(1),
\]
as required.

\paragraph{Step III: Flowing the squeezed strips}
Recall that by \eqref{eq:squeezed_strips}, $\lambda_k \ll e^{-k^p}$ is the width of the squeezed strips $\tL{I}$ defined by \eqref{eq:squeezed_strips},
and let $\xi_k$ be the function associated with $\lambda_k$ as defined in Lemma~\ref{lem:zero_cap}. 
Define 
\[
\xi_k^I(x,y) := \sum_{z\in Z_I\cap [0,1]^m} \xi_k(x,y-z )
\]
and
\[
v_k(t,x,y) =  \xi_k^I(x-t,y).
\]
Note that\footnote{The righthand side inequality in \eqref{eq:cost_transport_vector_field} is the reason we need $\lambda_k$ to be so small, which is achieved by the two-stage squeezing.
In the subcritical case $sp<n$, the $W^{s,p}$-capacity of small balls is much smaller, hence $\lambda_k$ can be larger (that is, the results of Lemma~\ref{lem:zero_cap} hold for larger values of $\lambda_k$), and then the one-stage squeezing used in \cite{JM18} suffices.}
\beq
\label{eq:cost_transport_vector_field}
\|v_k(t,\cdot)\|_{s,p} = \|\xi_k^I\|_{s,p} \le \sum_{z\in Z_I\cap [0,1]^m} \|\xi_k\|_{s,p} \lesssim k^m \|\xi_k\|_{s,p} = o(1).
\eeq

Let $\theta_I(t,x,y)$ be the solution of 
\[
\pl_t\theta_I = v_k(t,\theta_I,y), \qquad \theta_I(0,x,y) = x,
\]
and define $\Theta_I(t,x,y) = (\theta_I(t,x,y),y)$.
Denote $\theta_I(x,y) := \theta_I(1,x,y)$ and $\Theta_I(x,y) := \Theta_I(1,x,y)$.

Note that for $y\in \tL{I}$, we have $\xi_k^I(0,y) \ge 1$, and therefore $\theta_I(t,0,y) \ge t$.
Since $\theta_I(t,x',y) > \theta_I(t,x,y)$ whenever $x'>x$, we have that 
\beq
\label{eq:transport}
\theta_I(x,y) > \theta_I(0,y) \ge 1, \qquad \text{for every $x>0$ and $y\in \tL{I}$.}
\eeq
Note also that since $\xi_k\ge 0$, we have that 
\beq
\label{eq:transport_2}
\theta_I(x,y) \ge x, \qquad \text{for every $(x,y)$.}
\eeq
Finally, \eqref{eq:cost_transport_vector_field} implies that
\beq
\label{eq:cost_transport}
\dist_{s,p}(\Theta_I,\id) = o(1).
\eeq

\paragraph{Step IV: conclusion of the proof}

Now, define 
\[
\Phi_k := \Phi_k^{2^m} \circ \ldots \circ \Phi_k^1, \qquad
\Phi_k^I := \Psi_I^{-1} \circ \Theta_I \circ \Psi_I.
\]
Note that $\Phi_k$ and $\Phi_k^I$ only change the $x$ coordinates; therefore, we write
\[
\Phi_k(x,y) = (\phi_k(x,y),y), \qquad
\Phi_k^I(x,y) = (\phi_k^I(x,y),y).
\]
Estimates \eqref{eq:squeezing_cost} and \eqref{eq:cost_transport}, together with Lemma~\ref{lem:right_invariance} imply that
\[
\dist_{s,p}(\id, \Phi_k) = o(1).
\]
We now claim that $\Phi_k(U) \cap U = \emptyset$. This will complete the proof as it shows that the displacement energy of $U$ is zero, since $\Phi_k(U) \cap U = \emptyset$ implies that $E(U) \le \dist_{s,p}(\id,\Phi_k)$ and the righthand side tends to zero.

Let $(x,y)\in U$.
In particular, $y\in L_I$ for some $I$. 
Therefore, $\psi_I(x,y) \in \tL{I}$, and therefore, since $x>0$, we have from \eqref{eq:transport}-\eqref{eq:transport_2} that
\[
\phi_k(x,y) \ge \phi_k^I(x,y) = \theta_I(x,\psi_I(x,y)) > 1,
\]
hence $\Phi_k(x,y) \notin U$, hence $\Phi_k(U) \cap U = \emptyset$.

\paragraph{Acknowledgements}
We are grateful to Martin Bauer, Philipp Harms and Stephen Preston for introducing us their paper and the notion of displacement energy.
This work was partially supported by the Natural Sciences and Engineering Research Council of Canada under operating grant 261955.


{\footnotesize
\bibliographystyle{amsalpha}
\providecommand{\bysame}{\leavevmode\hbox to3em{\hrulefill}\thinspace}
\providecommand{\MR}{\relax\ifhmode\unskip\space\fi MR }
\providecommand{\MRhref}[2]{%
  \href{http://www.ams.org/mathscinet-getitem?mr=#1}{#2}
}
\providecommand{\href}[2]{#2}

}
\end{document}